\let\SavedRightarrow=\Rightarrow
\let\Rightarrow=\SavedRightarrow
\newtheorem{theorem}{Theorem}[section]
\newtheorem{definition}[theorem]{Definition}
\newtheorem{lemma}[theorem]{Lemma}
\newcommand\ZFC{\mathit{ZFC}}
\newcommand\MA{\mathit{MA}}
\newcommand\CH{\mathit{CH}}
\newcommand\LL{\mathcal{L}}
\newcommand\WW{\mathcal{W}}
\newcommand\height{\mathrm{height}}  
\newcommand\ran{\mathrm{ran}}
\newcommand\diam{\mathrm{diam}}  
\newcommand\RRR{{\mathbb R}} 
\newcommand\QQQ{{\mathbb Q}}
\newcommand\res{\mathord {\upharpoonright}}  
\newcommand\cat{^{\mathord{\frown}}} 
\newcommand\down{{\fam0 \downarrow}}
\newcommand\up{{\fam0 \uparrow}}
\newcommand\iv{^{-1}}   
\newcommand\tro{\sqsubset}  
\newcommand\eop{\raisebox{-2pt}{\mbox{\Huge \Smiley}}}
\newenvironment{proof}{{\bf Proof.}}{\eop\medskip}
\newenvironment{proofof}[1]{\medskip \textbf{Proof of #1.}}{\eop\medskip}
\newenvironment{itemizz}{\begin{itemize}\setlength{\itemsep}{-1mm}}
{\end{itemize}}
\begin{document}

\title{Continuous Maps on Aronszajn Trees%
\footnote{
2010 Mathematics Subject Classification:
Primary  03E35, 54F05.
Key Words and Phrases: 
special Aronszajn tree, diamond.
}}

\author{Kenneth Kunen\footnote{University of Wisconsin,  %
Madison, WI  53706 \ \ kunen@math.wisc.edu},
Jean A. Larson\footnote{University of Florida,  %
Gainesville, FL 32611 \ \ jal@ufl.edu},
and Juris Stepr\=ans\footnote{York University,  %
Toronto, Ontario M3J 1P3  \ \ steprans@yorku.ca}  }

\maketitle

\begin{abstract}
Assuming $\diamondsuit$:  Whenever $B$ is a totally imperfect
set of real numbers, there is
special Aronszajn tree with no continuous order preserving map into $B$.
\end{abstract}

\section{Introduction}
\label{sec-intro}
We use the following notation:  If $\tro$ is a relation on $T$ and $x \in T$,
then $x\up$ denotes $\{y \in T : x \tro y\}$ and
$x\down$ denotes $\{y \in T : y \tro x\}$.  Then a 
\emph{tree} is a set $T$ with a strict partial order $\tro$
such that each $x\down$ is well-ordered by $\tro$.
In a tree $T$, $\height(x)$ is the order type of $x\down$ and
$\LL_\alpha = \LL_\alpha(T) = \{x \in T : \height(x) = \alpha\}$.
$T$ is an \emph{$\omega_1$--tree} iff
$|T| = \aleph_1$, each $\LL_\alpha(T)$ is countable,
and $\LL_{\omega_1}(T) = \emptyset$. 
An \emph{Aronszajn tree} is an $\omega_1$--tree $T$ with no uncountable chains;
then, $T$ is \emph{special} iff $T$ is a countable union of antichains.

We give a tree $T$ its natural \emph{tree topology}, in which
$U \subseteq T$ is open iff for all  $y \in U$ with $\height(y)$ a limit
ordinal, there is an $x \tro y$ such that $x\up \cap y\down \subseteq U$.
Then the elements whose heights are successor ordinals or $0$ are
isolated points.  Note that $T$ need not be Hausdorff,
although any tree that we construct explicitly will be Hausdorff
(equivalently, $y\down = z\down \to y = z$).

Let $T$ be an $\omega_1$--tree.
A map $\varphi : T \to \RRR$ is called \emph{order preserving}
iff $x \tro y \to \varphi(x) < \varphi(y)$ for all $x,y \in T$.
The existence of such a $\varphi$
clearly implies that $T$
is Aronszajn, but not necessarily special; there is a counter-example
\cite{Devlin} under $\diamondsuit$.
However, it is easy to see (first noted by Kurepa \cite{Kurepa})
that $T$ is special iff there is an order preserving $\varphi: T \to \QQQ$.

Let $T$ be an Aronszajn tree.  If there is an order preserving
$\varphi : T \to \RRR$, then there is also a \emph{continuous}
order preserving $\psi : T \to \RRR$, where
$\psi(y) = \varphi(y)$ unless $\height(y)$ is a limit ordinal,
in which case $\psi(y) = \sup\{\varphi(x) : x \tro y\}$.
If we assume $\MA(\aleph_1)$, then every Aronszajn tree is special,
as Baumgartner \cite{Baumgartner} proved by forcing with
finite order preserving maps into $\QQQ$.  Note that this same
forcing also produces a \emph{continuous}
order preserving $\psi : T \to \QQQ$.
We show here that this cannot be done in $\ZFC$, since assuming
$\diamondsuit$, there is an Aronszajn tree $T$ with an order
preserving map into $\QQQ$ (so $T$ is special), but
no continuous order preserving $\psi : T \to \QQQ$.\footnote{%
A continuous order preserving map $\psi$ from an Aronszajn tree $T$
into the rationals is a nice thing to have.  Todor\v{c}evi\'{c}
\cite[Remark 4.3.(d) on page 429]{Todorcevic} proved that a combination of such
a map with his osc map can be used to color the $2$-element
chains of $T$ with countably many colors so that every chain
of order type $\omega^\omega$ receives all the colors.}

This last result can be generalized somewhat.
First, we can replace ``order preserving'' by the
weaker requirement that each $\psi\iv\{q\}$ is discrete
in the tree topology; observe that when $\psi$ is order preserving,
each  $\psi\iv\{q\}$ is an antichain, and hence closed and discrete.
Then, we can replace $\QQQ$ by any metric space
which has no Cantor subsets (that is, subsets homeomorphic
to $2^\omega$):

\begin{theorem}
\label{thm-special-diamond}
Assume $\diamondsuit$, and fix a metric space $B$
with no Cantor subsets such that $|B| \le \aleph_1$.  Then
there is a special Aronszajn tree $T$
which has no continuous map $\psi : T \to B$ such that each
$\psi\iv \{b\}$ is discrete.
\end{theorem}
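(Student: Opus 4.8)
The plan is to build $T$ by a $\diamondsuit$-guided recursion on levels and to reduce the nonexistence of $\psi$ to the following dichotomy: any continuous $\psi : T \to B$ with discrete fibers must either force a continuous injection of $2^\omega$ into $B$ (a Cantor subset, impossible by hypothesis) or produce a single limit node whose value is pinned \emph{outside} $B$ (impossible since $\psi(y)\in B$). For the ambient tree I would take the standard special Aronszajn tree of bounded strictly increasing sequences of rationals under extension, with the usual pruning at limit levels to keep each $\LL_\alpha$ countable; specialness is free from an order preserving map into $\QQQ$, and the extension property both makes $T$ Aronszajn and leaves us free to cap any prescribed branch at a limit level.

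Fix a $\diamondsuit$-sequence that at each limit $\alpha$ guesses a function $f_\alpha : T\res\alpha \to B$. Suppose toward a contradiction that $\psi : T \to B$ is continuous with discrete fibers; by $\diamondsuit$ the set of $\alpha$ with $f_\alpha = \psi \res (T \res \alpha)$ is stationary, so it suffices that the construction, acting on the guess $f_\alpha$ at one such $\alpha$, makes $\psi$ impossible. Two features of $\psi$ drive the argument. First, continuity pins the value at each limit node $y$: $\psi(y)=\lim_{x\tro y}\psi(x)$ along the branch $y\down$. Second, discreteness of $\psi\iv\{b\}$ means that $b$ is \emph{never} attained cofinally below a limit node of value $b$; in particular $\psi$ is not constant on any set that is cofinal in some branch. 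This is the lever: above any node there must be two incomparable extensions on which $\psi$ takes different values, so $\psi$ can always be \emph{value-split}.

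Using the guess $f_\alpha=\psi\res(T\res\alpha)$, I would build inside $T\res\alpha$ a binary scheme $\langle t_s : s \in 2^{<\omega}\rangle$ with $t_s \tro t_{s\cat i}$ and heights increasing to $\alpha$, chosen by a fusion so that (i) at each split the values $\psi(t_{s\cat 0})$ and $\psi(t_{s\cat 1})$ are separated by a positive distance, and (ii) the $\diam$ of the $\psi$-image above $t_s$ tends to $0$ along each branch. Then $x \mapsto v_x := \lim_n \psi(t_{x\res n})$ is a continuous injection of $2^\omega$ into the completion of $B$. Now the dichotomy: if every $v_x \in B$, its range is a Cantor subset of $B$, a contradiction; otherwise some $v_{x^\ast}\notin B$, and the construction caps the branch $\bigcup_n t_{x^\ast\res n}\down$ by a single node $t^\ast \in \LL_\alpha(T)$, so that in the final tree continuity forces $\psi(t^\ast)=v_{x^\ast}\notin B$, which is absurd. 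Capping at most one branch per level keeps each $\LL_\alpha$ countable and the tree special and Aronszajn.

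The main obstacle is steps (i)–(ii): producing a genuine continuous \emph{injection} of $2^\omega$, not merely an uncountable family of distinct limits. Separation of values rests on the value-splitting lever above, but one must guarantee it survives the diameter-shrinking of (ii), i.e.\ that the $\psi$-values above $t_s$ can be forced to cluster (so the limits $v_x$ exist) while the two children stay genuinely apart. The natural way to secure both is to build the \emph{tree itself} around the guessed $\psi$: at the guessing levels, choose the new nodes so that their $B$-values realize a prescribed, ever-finer separated pattern, making the fusion automatic. Verifying that this is compatible with a special Aronszajn tree, and that discreteness alone keeps value-splitting available at every node the fusion reaches, is the technical heart of the argument; the remainder is the routine bookkeeping of a $\diamondsuit$ construction.
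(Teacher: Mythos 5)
Your overall strategy matches the paper's: a $\diamondsuit$-guided construction that, at a level $\alpha$ where the guess is correct, builds a Cantor scheme of nodes below $\alpha$ whose $\psi$-values follow an ever-finer separated pattern, uses ``no Cantor subset'' to find a branch of the scheme whose value-pattern has empty intersection in $B$, and caps that branch so that continuity fails at the new node. The dichotomy you state is the paper's dichotomy, and your ``value-splitting lever'' (discrete fibers forbid $\psi$ from being constant on any cone) is a correct observation that the paper also exploits. However, there is a genuine gap, exactly at the point you flag as ``the technical heart,'' and your proposed remedy for it does not work. The lever gives only that above every node there exist two incomparable nodes with different values; it gives no \emph{largeness}. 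To run the fusion you must, after restricting to nodes $t \in t_s\up$ with $\psi(t)$ in a small ball $W_s$, be able to split \emph{again inside that ball}, at nodes of height above any prescribed $\beta < \alpha$, and so on recursively; nothing guarantees that the set $\{t \in t_s\up : \psi(t) \in W_s\}$ is even unbounded in height below $\alpha$, and your condition (ii) as stated (the diameter of the $\psi$-image of the cone above $t_s$ tends to $0$) is generally unachievable, since discreteness of fibers forces $\psi$ to oscillate above every node. The paper's machinery exists precisely to supply the missing largeness notion: stationary open subsets of $T$, the pruning operation $U \mapsto U^p$ with $(U^p)^p = U^p$, a lemma that removing a discrete set keeps a stationary open set stationary, and the key paracompactness-plus-pressing-down lemma producing points $b_0 \ne b_1$ such that $\psi\iv(N_\varepsilon(b_i)) \cap U_s \cap x_s\up$ is \emph{stationary} for every $\varepsilon$; one then takes $U_{s\cat i} = (\psi\iv(N_\varepsilon(b_i)) \cap U_s \cap x_s\up)^p$, and stationarity is what persists through the recursion and yields nodes of unbounded height.

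Two further problems follow from the same omission. First, the scheme's existence cannot be established working purely ``inside $T\res\alpha$ from the guess'': stationarity is a property of $\psi$ on the whole tree, and it must be reflected down to the level $\alpha$. The paper does this with a continuous chain of elementary submodels, choosing each $W_s, U_s, x_s$ inside $M_{\xi_n}$ so that heights land in $[\alpha_{n-1},\alpha_n)$; this is why its Main Lemma holds only at limit points of a club, which is then intersected with the stationary set of correct guesses. Your proposal has no substitute for this reflection step, and without one there is no reason that any correct-guess level admits the scheme at all. Second, your patch --- ``build the tree itself around the guessed $\psi$: choose the new nodes so that their $B$-values realize a prescribed pattern'' --- is incoherent: the builder controls the tree order, not the adversary's map; at a guessing level, $T\res\alpha$ and the guessed values are already fixed, and the only freedom is which cofinal branches to cap. (A smaller omitted point: for specialness, the capped branch must have bounded values under the order preserving map into $\QQQ$; the paper builds this into $(\dag)$ and into the requirement $U_\emptyset = \{x : \varphi(x) < 1\}$, whereas your scheme places no such constraint on the branch it caps.)
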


By $\CH$ (which follows from $\diamondsuit$), $|B| \le \aleph_1$
holds whenever $B$ is separable, as well as when $B$ has 
a dense subset of size $\aleph_1$.

Observe that if $T$ is special and
$B \subseteq \RRR$ does have a Cantor subset $F$,
then there must be a continuous order
preserving $\psi : T \to B$.  Just let $D \subseteq F$ be
countable and order-isomorphic to $\QQQ$, let
$\varphi : T \to D$ be order preserving, and then construct
a continuous $\psi : T \to F$ as described above.

In Theorem \ref{thm-special-diamond}, $T$ depends on $B$.
There is no one tree which works for all $B$ by the following,
which holds in $\ZFC$ (although it is trivial unless $\CH$ is true):

\begin{theorem}
\label{thm-build-map}
Let $T$ be any special Aronszajn tree.  Then
there is a $B \subseteq \RRR$ with no Cantor subsets
and a continuous order preserving map $\psi : T \to B$ such that
for all $x,y \in T$, $\psi(x) \ne \psi(y)$ unless $x\down = y\down$.
\end{theorem}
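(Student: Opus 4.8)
The plan is to reduce the statement to the construction of a single map and then to do the real work only under $\CH$. It suffices to build one continuous order preserving $\psi$ that is injective on $\down$-classes (i.e.\ $\psi(x)=\psi(y)$ only when $x\down=y\down$) and whose range contains no Cantor subset, equivalently no nonempty perfect subset. Any such $\psi$ has $|\ran\psi|\le|T|=\aleph_1$, so if $\CH$ fails then $\ran\psi$ has size $<2^{\aleph_0}$ while every perfect set has size $2^{\aleph_0}$; hence the range is automatically totally imperfect, and this is the sense in which the theorem is trivial unless $\CH$. So I assume $\CH$ and fix an enumeration $\langle P_\xi : \xi < \omega_1\rangle$ of all perfect subsets of $\RRR$.

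The backbone is a Cantor/Lusin scheme of closed intervals indexed by $T$. Recursing on $\height(x)$, I attach to each $x$ an interval $I_x=[a_x,b_x]\subseteq(0,1)$ so that: (i) incomparable nodes receive disjoint intervals; (ii) $x\tro y$ implies $a_x<a_y$ and $I_y\subseteq(a_x,b_x)$; and (iii) at a limit node $y$ I am forced to set $a_y=\sup\{a_x:x\tro y\}$, choosing $b_y$ with $a_y<b_y\le\inf\{b_x:x\tro y\}$. The only care needed at successor steps is to keep a definite gap between the $a$'s and the $b$'s so that this last inequality can always be met. Setting $\psi(x)=a_x$, condition (ii) gives order preservation and makes the value at each limit node equal to the supremum of the values below it, which is exactly continuity in the tree topology; disjointness in (i) together with strict monotonicity along chains forces $\psi(x)\ne\psi(y)$ whenever $x\down\ne y\down$, while $x\down=y\down$ yields the same scheme data and hence the same value. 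Thus $\psi$ is continuous, order preserving, and injective on $\down$-classes regardless of $\CH$, which already settles the case $\neg\CH$.

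Under $\CH$ I interleave a Bernstein-style diagonalization with the level recursion, arranging that each $P_\xi$ contains a point missing from $\ran\psi$. The mechanism rests on one observation: because the intervals nest, the range is trapped inside the decreasing union $G_\lambda=\bigcup\{I_x:\height(x)=\lambda\}$, so any real excluded from some $G_\lambda$ onward can never be a value, including never a forced limit supremum. Handling $P_\xi$ at a designated level $\lambda_\xi$, I pick $r_\xi\in P_\xi$ thus: if $P_\xi$ meets a gap of $G_{\lambda_\xi}$, any such point is automatically absent from the range; otherwise $P_\xi\subseteq G_{\lambda_\xi}$, and since $P_\xi$ is uncountable while the endpoints of the countably many level intervals are not, $P_\xi$ meets $\operatorname{int} I_{x^*}$ for some $x^*$, and I take $r_\xi$ there and \emph{drill a permanent hole}: when the children of $x^*$ are created I route their intervals into $I_{x^*}\setminus(r_\xi-\delta,r_\xi+\delta)$ with $(r_\xi-\delta,r_\xi+\delta)\subseteq\operatorname{int} I_{x^*}$, which is easy since $T$ contributes only countably many children and the two flanking intervals have room to spare. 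Thereafter every interval above $x^*$ avoids this gap, so every value and every supremum along those branches differs from $r_\xi$, while branches through other level-$\lambda_\xi$ nodes stay in intervals disjoint from $I_{x^*}$ and so also avoid $r_\xi$. The main obstacle is precisely that the limit values are forced suprema that cannot be adjusted after the fact; the hole-drilling handles them uniformly by trapping. The delicate point to verify is the simultaneous maintenance, through all $\omega_1$ stages, of disjointness, the nesting $I_y\subseteq(a_x,b_x)$, and above all the positive gaps $\sup\{a_x:x\tro y\}<\inf\{b_x:x\tro y\}$ needed to define the limit intervals, since hole-drilling only relocates children within a parent interval and never shrinks widths faster than the construction already demands, so these invariants should survive.
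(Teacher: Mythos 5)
Your construction never uses the hypothesis that $T$ is \emph{special}, and that is fatal. The recursion you describe, if it could always be carried through, would apply verbatim to an arbitrary $\omega_1$--tree, in particular to a Suslin tree (which exists under $\diamondsuit$), and the map $x \mapsto a_x$ it produces is order preserving into $\RRR$. But a Suslin tree admits no order preserving real-valued map at all: given such an $f$ (on a pruned Suslin tree), pick for each node $x$ a rational $q_x$ with $f(x) < q_x < f(y)$ for some $y$ above $x$; fix $q$ such that uncountably many $x$ have $q_x = q$; then the minimal elements of $\{z : f(z) > q\}$ form an antichain, each such $x$ lies below one of them, and each of them has only countably many predecessors, so the antichain is uncountable --- contradiction. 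Consequently the step you defer --- that at every limit node $y$ one has $\sup\{a_x : x \tro y\} < \inf\{b_x : x \tro y\}$ --- is not a routine invariant that ``should survive''; it is precisely where specialness must enter, and no discipline about ``keeping a definite gap at successor steps'' can secure it: along a branch the widths $b_x - a_x$ decrease, and nothing you impose prevents them from tending to $0$ on a branch that $T$ extends, in which case the limit interval is degenerate and the recursion halts. Compare the paper's proof: writing $T = \bigcup_n A_n$ with each $A_n$ an antichain, a node $t \in A_{a(t)}$ is charged width of order $2^{-a(t)}$; since a prospective limit node $y$ itself lies in some $A_m$ with $A_m \cap y\down = \emptyset$, the index $m$ is still unused along $y\down$, and this reserved amount is what guarantees strictly positive room at $y$. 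Some mechanism of this kind, tied to the antichain decomposition, has to be wired into your successor steps; supplying it is the real content of the constructive half of the theorem, and it is absent from your proposal.

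There is also a smaller error in the $\CH$ diagonalization: it is not true that any real excluded from $G_\lambda$ ``can never be a value.'' The values $\psi(x) = a_x$ with $\height(x) < \lambda$ all lie \emph{outside} $G_\lambda$ (each level-$\lambda$ interval is contained in $(a_x, b_x]$ for every ancestor $x$ and is disjoint from the intervals of non-ancestors), so in your first case you must also avoid this countable set of old values, and when $P_\xi \setminus G_{\lambda_\xi}$ consists entirely of such values you are forced into the hole-drilling case anyway --- which is available, since $P_\xi \cap G_{\lambda_\xi}$ is then uncountable. That slip is fixable, and your Bernstein-style scheme (trivial unless $\CH$, then diagonalize against an enumeration of all perfect sets) is a genuinely different route to total imperfectness than the paper's, which instead takes a 1-1 $\zeta$ into a totally imperfect set and kills Cantor subsets of the range by coordinate projections. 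But the diagonalization is moot until the interval scheme it rides on is actually shown to exist.
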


So, $\psi$ is actually 1-1 if $T$ is Hausdorff.
Theorem \ref{thm-special-diamond} is proved in Section \ref{sec-kill},
and Theorem \ref{thm-build-map} is proved in Section \ref{sec-const}.

By Theorem \ref{thm-build-map}, the ``$|B| \le \aleph_1$'' cannot be removed
in Theorem \ref{thm-special-diamond}, since $B$ could be the direct sum
of all totally imperfect subspaces of $\RRR$.

\section{Killing Continuous Maps}
\label{sec-kill}
Throughout, $T$ always denotes an $\omega_1$--tree and $B$
denotes a metric space.
We begin with some remarks on pruning open $U \subseteq T$.
In the special case when $U$ is a subtree (that is,
$x\down \subseteq U$ for all $x \in U$), the pruning
reduces to the standard procedure of removing all $x \in U$
with $x\up \cap U$ countable.  For a general $U$, we replace
``countable'' by ``non-stationary'' (which is the same when $U$ is a subtree).

\begin{definition}
For $U \subseteq T$: $U$ is \emph{stationary} iff $\{\height(x) : x \in U\}$
is stationary, and $U^p$ is the set of all $x \in U$
such that $x\up \cap U$ is stationary.
\end{definition}

Clearly $U^p \subseteq U$.  If $U$ is open then
$U^p$ is open, since $x \in U^p \to x\down \cap U \subseteq U^p$.

\begin{lemma}
If $U \subseteq T$ is open, then $(U^p)^p = U^p$.
\end{lemma}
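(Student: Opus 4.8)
The plan is to use that $(U^p)^p \subseteq U^p$ holds for free (apply the already-noted containment $W^p \subseteq W$ with $W = U^p$), so the entire content is the reverse inclusion $U^p \subseteq (U^p)^p$. I would fix $x \in U^p$ and show that $x\up \cap U^p$ is stationary. The first step is to localize below $x$: put $V = x\up \cap U$. Note $x\up$ is open (for a limit node $y \in x\up$ the witness $w = x$ satisfies $x\up \cap y\down \subseteq x\up$), so $V$ is open, and $V$ has stationary height-set exactly because $x \in U^p$. Moreover, if $y \in V$ then $y\up \subseteq x\up$, so $y\up \cap V = y\up \cap U$; consequently $V^p = x\up \cap U^p$. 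This reduces the lemma to the self-contained claim (★): \emph{if $V$ is open with stationary height-set, then $V^p$ has stationary height-set.}

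To prove (★) I would argue by contradiction, assuming the height-set of $V^p$ is non-stationary. Setting $R = V \setminus V^p$, this forces the height-set $S$ of $R$ to be stationary, and by definition every $y \in R$ has $y\up \cap V$ non-stationary. For each $y \in R$ fix a club $C_y \subseteq (\height(y), \omega_1)$ disjoint from $\{\height(z) : z \in y\up \cap V\}$. Since the levels of $T$ are countable, $\{y \in R : \height(y) < \alpha\}$ is countable for each $\alpha$, so $D_\alpha := \bigcap\{C_y : y \in R,\ \height(y) < \alpha\}$ is a club; let $D = \triangle_\alpha D_\alpha$, again a club. The role of $D$ is that for every limit $\beta \in D$ and every $y \in R$ with $\height(y) < \beta$ we have $\beta \in C_y$. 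The contradiction should then arise by choosing a suitable $\beta \in S \cap D$, picking $y' \in R$ at height $\beta$, and locating a predecessor $z \tro y'$ with $z \in R$: for then $y' \in z\up \cap V$ would put $\height(y') = \beta$ into the height-set of $z\up \cap V$, contradicting $\beta \in C_z$.

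I expect the main obstacle to be precisely the existence of such an $R$-predecessor, because a ``minimal bad node'' $y'$ (one with no predecessor in $R$) is a priori possible. This is where openness is essential. When $\beta$ is a limit, openness of $V$ yields $w \tro y'$ with the entire branch-interval strictly between $w$ and $y'$ contained in $V$; if $y'$ has no $R$-predecessor, then that whole interval lies in $V^p$, so its heights, which fill $(\height(w), \beta)$, all belong to the height-set of $V^p$. A second pressing-down argument, applying Fodor's lemma to the regressive map $\beta \mapsto \height(w)$, then shows that if these minimal bad nodes occurred at stationarily many $\beta$, the height-set of $V^p$ would contain a final segment $(\gamma, \omega_1)$, contradicting the assumption that it is non-stationary. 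Hence the minimal bad nodes are confined to a non-stationary set of heights, so stationarily many $\beta \in S \cap D$ admit an $R$-predecessor of $y'$; any such $\beta$ delivers the contradiction above, establishing (★) and therefore the lemma.
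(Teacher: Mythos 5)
Your proof is correct, but it follows a genuinely different route from the paper's. The paper argues directly: fixing $a \in U^p$ and an arbitrary club $C$, it picks nodes $y_\beta \in a\up \cap U$ at stationarily many levels $\beta$ that are limit points of $C$, uses openness of $U$ to drop from each $y_\beta$ to a predecessor $x_\beta \in a\up \cap U$ with $\height(x_\beta) \in C$, and applies the Pressing Down Lemma once to freeze a single $x = x_\beta$ on a stationary set; this $x$ lies in $U^p$ (the stationarily many $y_\beta$ sit above it), has height in $C$, and is above $a$, so $a\up \cap U^p$ is stationary. You instead localize to $V = x\up \cap U$ --- your identity $V^p = x\up \cap U^p$ is a clean reduction --- and prove the self-contained claim $(\star)$ by contradiction: assuming $V^p$ is non-stationary, the complement $R = V \setminus V^p$ is stationary; you build avoidance clubs $C_y$ and their diagonal intersection $D$, then split into your good case (an $R$-predecessor $z \tro y'$ exists, and $\height(y') = \beta \in C_z$ clashes with $C_z$ avoiding the heights of $z\up \cap V$) and your bad case (minimal nodes, where openness plus Fodor forces the heights of $V^p$ to contain a final segment $(\gamma, \omega_1)$, contradicting the assumption). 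I checked the details --- openness of $x\up$, the countability of $\{y \in R : \height(y) < \alpha\}$ needed for $D_\alpha$, the limit-point property of $D$, and both cases --- and they are sound. The trade-off: the paper's direct argument is shorter, needing a single pressing-down application and no contradiction or diagonal intersection, because it tests stationarity of $a\up \cap U^p$ against an arbitrary club head-on; your version costs more machinery but isolates the reusable statement $(\star)$ (open plus stationary implies the pruned set is stationary) and makes explicit a structural dichotomy: either membership in $R$ propagates downward along branches, or $V^p$ swallows whole intervals of levels. Both proofs ultimately rest on the same two ingredients --- openness at limit-height nodes and Fodor's lemma --- but they deploy them in architecturally different ways.
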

\begin{proof}
Fix $a \in U^p$; so $a\up \cap U$ is stationary.  We need to show:
$\{x \in a\up \cap U : x\up \cap U \text{ is stationary}\} 
\ \text{ is stationary}$.
So, we fix a club $C \subseteq \omega_1$, and we shall  find an $x$ such
that $\height(x) \in C$ and 
$a \tro x$ and $x \in U$ and $x\up \cap U$ is stationary.

Since $a \in U^p$, fix a stationary $S$ such that for all
$\beta \in S$:  $a\up \cap U \cap \LL_\beta(T) \ne \emptyset$
and $\beta$ is a limit point of $C$.
For each $\beta \in S$:  Choose $y_\beta \in a\up \cap U \cap \LL_\beta(T)$;
then, since $U$ is open, choose $x_\beta \tro y_\beta$ such that
$x_\beta \in  a\up \cap U$ and $\height(x_\beta) \in C$.

By the Pressing Down Lemma, fix $x$ and a stationary $S' \subseteq S$
such that $x_\beta = x$ for all $x \in S'$.  Then
$x\up \cap U$ is stationary (since it contains $\{y_\beta : \beta \in S'\}$)
and $\height(x) \in C$ and $a \tro x$ and $x \in U$.
\end{proof}

\begin{lemma}
\label{lemma-stat-minus-ac}
If $A \subseteq T$ is discrete in the tree topology
and $U$ is a stationary open 
set, then the set $S := \{\alpha : U \cap \LL_\alpha \ne \emptyset \,\wedge\,
 U \cap \LL_\alpha  \subseteq A\}$ is non-stationary.
Hence, $U \backslash A$ is stationary.
\end{lemma}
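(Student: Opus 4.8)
The plan is to prove the main assertion---that $S$ is non-stationary---and then read off the ``Hence'' clause as an easy corollary. For the latter, observe that if $\alpha \notin S$ while $U \cap \LL_\alpha \ne \emptyset$, then by definition of $S$ there is a point of $U \cap \LL_\alpha$ lying outside $A$, i.e.\ $(U \setminus A) \cap \LL_\alpha \ne \emptyset$. Hence $\{\height(x) : x \in U\} \setminus S \subseteq \{\height(x) : x \in U \setminus A\}$. Since $U$ is stationary and $S$ is non-stationary, the left-hand set is stationary, so $U \setminus A$ is stationary. Thus everything reduces to the non-stationarity of $S$.

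To show $S$ is non-stationary I would argue by contradiction, assuming $S$ is stationary. Intersecting with the club of limit ordinals, I may assume every $\alpha \in S$ is a limit. For each $\alpha \in S$ choose $a_\alpha \in U \cap \LL_\alpha$; since $\alpha \in S$ this forces $a_\alpha \in A$, and $a_\alpha$ has limit height. Now I combine the two hypotheses at the point $a_\alpha$. Openness of $U$ supplies a node $\tro a_\alpha$ whose open chain-interval up to $a_\alpha$ lies in $U$; discreteness of $A$, applied to an isolating neighborhood of $a_\alpha$ and again invoking openness at the limit point $a_\alpha$, supplies a node $\tro a_\alpha$ whose open chain-interval up to $a_\alpha$ misses $A$ entirely. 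Taking $b_\alpha \tro a_\alpha$ to be the larger of these two nodes (they are comparable, both lying on the chain $a_\alpha\down$), I obtain
\[ b_\alpha\up \cap a_\alpha\down \subseteq U \qquad\text{and}\qquad (b_\alpha\up \cap a_\alpha\down) \cap A = \emptyset . \]

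Since $\height(b_\alpha) < \alpha$, the assignment $\alpha \mapsto \height(b_\alpha)$ is regressive, so the Pressing Down Lemma yields a stationary $S_1 \subseteq S$ on which $\height(b_\alpha)$ equals a constant $\beta$; as $\LL_\beta$ is countable and a function from a stationary set into a countable set has a stationary fibre, a second application gives a stationary $S_2 \subseteq S_1$ and a single node $b$ with $b_\alpha = b$ for all $\alpha \in S_2$. The punchline---and the step I expect to be the real crux---exploits the two chosen levels at once: pick any $\alpha < \alpha'$ in $S_2$ and let $c$ be the unique node of height $\alpha$ on the branch $a_{\alpha'}\down$. Then $b \tro c \tro a_{\alpha'}$, so $c \in b\up \cap a_{\alpha'}\down$, whence $c \in U$ and $c \notin A$. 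But $c \in \LL_\alpha$ with $\alpha \in S$, so $c \in U \cap \LL_\alpha \subseteq A$, a contradiction. The only delicate points are correctly amalgamating the neighborhood from openness of $U$ with the one from discreteness of $A$ into a single $b_\alpha$, and the double use of pressing down; once $b$ is fixed, the clash between ``$c$ lies in $U$ at a level of $S$'' (forcing $c \in A$) and ``$c$ lies on an $A$-free chain-interval'' (forcing $c \notin A$) closes the argument.
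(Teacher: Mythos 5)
Your proof is correct, and its crux coincides with the paper's: a point $y \in U \cap \LL_\alpha \subseteq A$ of limit height admits $x \tro y$ with $x\up \cap y\down \subseteq U$ and $x\up \cap y\down \cap A = \emptyset$ (your amalgamation of the two witnessing nodes is legitimate, since both lie on the chain $a_\alpha\down$ and are therefore comparable), and this interval then collides with any $S$-level strictly between $\height(x)$ and $\alpha$. Where you diverge is the finish. The paper observes that this collision already shows $S$ contains no ordinals in the interval $(\height(x), \alpha)$ below any limit $\alpha \in S$; that is, $S$ is discrete in the order topology of $\omega_1$, hence non-stationary --- no contradiction hypothesis and no Pressing Down Lemma at all. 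Your double pressing-down is sound but heavier than needed: after the first application (stabilizing $\height(b_\alpha) = \beta$), any $\alpha < \alpha'$ in $S_1$ already gives $\height(b_{\alpha'}) = \beta < \alpha$, which is all the punchline uses, so stabilizing the node $b$ itself is superfluous; and even the first application can be dropped, since for any $\alpha' \in S$ that is a limit of points of $S$ one can directly pick $\alpha \in S$ with $\height(b_{\alpha'}) < \alpha < \alpha'$. So the two arguments share the key configuration; the paper's packaging buys brevity and the slightly stronger conclusion that $S$ is discrete, while yours is a correct but more machinery-laden route to the same contradiction.
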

\begin{proof}
In fact, $S$ is discrete in the ordinal (= tree) topology on $\omega_1$.
To see this, suppose that $\alpha \in S$ is a limit ordinal.
Then fix $y \in U \cap \LL_\alpha$.
Note that $y \in A$ since $ U \cap \LL_\alpha  \subseteq A$.
Since $U$ is open and $A$ is discrete, we may
fix  $x \tro y$ such that $x \up \cap y \down \subseteq U$
and $x \up \cap y \down \cap A = \emptyset$.  Let $\xi = \height(x)$. 
Then $\xi < \alpha$, and $S$ contains no ordinals between $\xi$ and $\alpha$.
\end{proof}

The next lemma has a much simpler proof when $B$ is separable 
(then, each $\WW_n$ can be a singleton).
For $b \in B$ and $\varepsilon > 0$, let
$N_\varepsilon(b) = \{z \in B : d(b,z) < \varepsilon\}$
(where $d$ is the metric on $B$).

\begin{lemma}
\label{lemma-parac}
Suppose that $U \subseteq T$ is a stationary open set,
$B$ is any metric space, and
$\psi:  U \to B$ is continuous, with each $\psi\iv\{b\}$
discrete.
Then there are infinitely many $b \in B$ such that
$\psi\iv(N_\varepsilon(b))$ is stationary for all $\varepsilon > 0$.
\end{lemma}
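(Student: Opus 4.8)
The plan is to separate a purely combinatorial fact about $T$ from the metric input coming from $B$. Say that $b \in B$ is \emph{good} if $\psi\iv(N_\varepsilon(b))$ is stationary for every $\varepsilon > 0$, and let $G$ denote the set of good points; the goal is to show that $G$ is infinite. The combinatorial heart of the matter, and the step I expect to be the main obstacle, is the following claim: if $\{O_i : i \in I\}$ is a \emph{pairwise disjoint} family of open subsets of $T$ whose union is stationary, then some $O_i$ is stationary. The useful realization is that pairwise disjointness, rather than full topological discreteness, is exactly what the argument needs; this matters because disjointness is absolute and is preserved under preimages, so it will transfer to $T$ with no boundary trouble.

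I would prove the claim by contradiction. Assume every $O_i$ is non-stationary and put $O = \bigcup_i O_i$, so that $S := \{\height(x) : x \in O\}$ is stationary; intersecting with the club of limits, for each limit $\alpha \in S$ choose $y_\alpha \in O \cap \LL_\alpha$ and let $i(\alpha)$ be the unique index with $y_\alpha \in O_{i(\alpha)}$. Since $O_{i(\alpha)}$ is open, fix $x_\alpha \tro y_\alpha$ with $x_\alpha\up \cap y_\alpha\down \subseteq O_{i(\alpha)}$, so $\alpha \mapsto \height(x_\alpha)$ is regressive; by the Pressing Down Lemma and the countability of each level there are a fixed node $x^\ast$ and a stationary $S' \subseteq S$ with $x_\alpha = x^\ast$ for all $\alpha \in S'$. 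Each color class $\{\alpha \in S' : i(\alpha) = i\}$ is contained in $\{\height(x) : x \in O_i\}$, hence non-stationary, so uncountably many colors occur on $S'$. Picking one $\alpha$ per color gives uncountably many pairwise distinct indices, and for each the immediate successor $s_\alpha$ of $x^\ast$ below $y_\alpha$ satisfies $s_\alpha \in x^\ast\up \cap y_\alpha\down \subseteq O_{i(\alpha)}$. As the $O_i$ are disjoint, distinct colors force distinct $s_\alpha$, so $\LL_{\height(x^\ast)+1}$ would be uncountable, which is absurd.

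Granting the claim, I produce one good point. Let $\mathrm{Bad}$ be the set of $b \in B$ for which $\psi\iv(N_\varepsilon(b))$ is non-stationary for some $\varepsilon > 0$; this set is open, and it is covered by the balls $N_{\varepsilon_b}(b)$ (with $\varepsilon_b$ a witness for $b$), each having non-stationary preimage. Since every open cover of a metric space has a $\sigma$-disjoint open refinement, write such a refinement as $\bigcup_k \WW_k$ with each $\WW_k$ pairwise disjoint. Each $W \in \WW_k$ lies inside one of the balls, so $\psi\iv(W)$ is non-stationary, and by continuity $\{\psi\iv(W) : W \in \WW_k\}$ is a pairwise disjoint family of open sets; the contrapositive of the claim makes $\psi\iv(\bigcup \WW_k)$ non-stationary, and the countable union over $k$ shows $\psi\iv(\mathrm{Bad})$ is non-stationary. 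Since $U = \psi\iv(B)$ is stationary, $\psi\iv(G) = U \setminus \psi\iv(\mathrm{Bad})$ is stationary, so $G \neq \emptyset$. When $B$ is separable this step collapses: $\mathrm{Bad}$ is already a countable union of balls with non-stationary preimage, so neither the disjointification nor the claim is needed, and each $\WW_k$ may be taken to be a single ball.

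Finally, I upgrade ``nonempty'' to ``infinite'' by running the one-good-point result inside shrinking pieces of $B$. Given good points $g_1, \dots, g_m$, set $B' = B \setminus \{g_1, \dots, g_m\}$ and $U' = U \setminus \bigcup_{j} \psi\iv\{g_j\}$. Each $\psi\iv\{g_j\}$ is closed (as $\{g_j\}$ is closed and $\psi$ continuous) and discrete, so $m$ applications of Lemma~\ref{lemma-stat-minus-ac} show that $U'$ is a stationary open set, and $\psi\res U' : U' \to B'$ is continuous with discrete fibers. Applying the previous paragraph to this restricted system yields $g \in B'$ with $\psi\iv(N_\varepsilon(g) \cap B')$ stationary for all $\varepsilon$; since $\psi\iv(N_\varepsilon(g) \cap B') \subseteq \psi\iv(N_\varepsilon(g))$, this $g$ is good in the original sense and lies outside $\{g_1, \dots, g_m\}$. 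Iterating produces infinitely many good points, completing the proof.
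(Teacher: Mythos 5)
Your proof is correct, and while it shares the paper's overall skeleton---Stone's theorem to refine an open cover of $B$, a Pressing Down argument played against the countability of tree levels, and Lemma~\ref{lemma-stat-minus-ac} to upgrade one good point to infinitely many---the decisive combinatorial step is organized genuinely differently, and more simply. The paper, after fixing a discrete family $\WW_n$ with $\psi\iv(\bigcup\WW_n)$ stationary, reduces to $|B|\le\aleph_1$ (replacing $B$ by $\psi(U)$), enumerates $\WW_n$ as $\{W_\xi : \xi<\omega_1\}$, and takes a diagonal intersection of clubs $C_\xi$ to arrange $\xi_\alpha\ge\alpha$; that inequality is what lets it extract stationarily many $\alpha$ with pairwise distinct indices after pressing down. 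Your standalone claim---any pairwise disjoint family of open subsets of $T$ with stationary union has a stationary member---reaches the same contradiction with none of that machinery: since each color class on $S'$ is non-stationary and the nonstationary ideal is $\sigma$-complete, uncountably many colors must appear on $S'$, and one representative per color already yields uncountably many distinct nodes at level $\LL_{\height(x^\ast)+1}$. This avoids both the cardinality reduction and the diagonal intersection, and it makes explicit that only disjointness of the refinement matters, a fact the paper uses only through the parenthetical ``discrete (and hence disjoint).'' A second place where you are more careful than the source: the paper dismisses ``infinitely many'' with the single remark that each $U\setminus\psi\iv\{b\}$ is stationary open, but removing fibers alone does not rule out the lemma handing back a previously found point $b$ again, since the punctured preimage $\psi\iv(N_\varepsilon(b)\setminus\{b\})$ can perfectly well remain stationary. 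Your explicit restriction of the codomain to $B'=B\setminus\{g_1,\dots,g_m\}$---legitimate because the restricted map's range automatically avoids the deleted points, and $B'$ is still a metric space---is precisely the detail that forces genuinely new points at each iteration; the paper leaves it implicit.
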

\begin{proof}
Since each $U \setminus \psi\iv\{b\}$ is also stationary open by Lemma
\ref{lemma-stat-minus-ac}, it is sufficient to prove that there is one such $b$.
If there are no such $b$, then $B$ is covered by the open sets $W$ such
that $\psi\iv(W)$ is non-stationary.  By paracompactness of $B$,
this cover has a $\sigma$--discrete open refinement, $\{\WW_n : n \in \omega\}$.
So, each $\WW_n$ is a discrete (and hence disjoint) family of open sets $W$
such that $\psi\iv(W)$ is non-stationary,
and $B = \bigcup_{n\in\omega} (\bigcup \WW_n)$.

Fix $n$ such that $\psi\iv(\bigcup \WW_n)$ is stationary.
We may assume that $| \WW_n | \ge \aleph_1$,
since $| \WW_n | \le \aleph_0$ yields an obvious contradiction.
Also, we may assume that $|B| \le \aleph_1$
(replacing $B$ by $\psi(U) )$, so that $| \WW_n | = \aleph_1$.
Let $\WW_n = \{W_\xi : \xi < \omega_1\}$.

For each $\xi$, let $C_\xi$ be a club disjoint from
$\{\height(y) : y \in \psi\iv(W_\xi)\}$.
Let $D$ be the diagonal intersection; so $D$ is club and
$\xi < \alpha \in D \to \alpha \in C_\xi$.  
Let $S$ be the set of limit $\alpha \in D$ such that
$\LL_\alpha(T) \cap \psi\iv(\bigcup \WW_n) \ne \emptyset$;
then $S$ is stationary.
For $\alpha \in S$,
choose $y_\alpha \in  \LL_\alpha(T) \cap \psi\iv(\bigcup \WW_n)$.
Then $y_\alpha \in \psi\iv(W_{\xi_\alpha})$ for some (unique) $\xi_\alpha$,
and $\xi_\alpha \ge \alpha$ since $\alpha\in D$. 
Then fix $x_\alpha \tro y_\alpha$
with $x_\alpha \up \cap y_\alpha \down \subseteq  \psi\iv(W_{\xi_\alpha})$.
By the Pressing Down Lemma, fix $x$ and a stationary $S' \subseteq S$ such
that $x_\alpha = x$ for all $\alpha\in S'$. 
Then, using $\xi_\alpha \ge \alpha$, fix stationary
$S'' \subseteq S'$ such that  the $\xi_\alpha$,
for $\alpha \in S''$, are all different.
Then the sets $x\up \cap y_\alpha \down$,
for $\alpha \in S''$ are pairwise disjoint,
which is impossible because  $\LL_{\height(x) + 1}(T)$ is countable.
\end{proof}

\begin{proofof}{Theorem \ref{thm-special-diamond}}
Call $\psi: T\to B$ a \emph{DP} map iff $\psi$ is continuous
and each $\psi\iv\{b\}$ is discrete.

We build $T$, along with an order--preserving $\varphi : T \to  \QQQ$,
and use $\diamondsuit$ to defeat all DP maps $\psi : T \to  B$.

As a set, $T$ will be the ordinal $\omega_1$, and the root will be $0$.
We shall define the tree order $\tro$ so that
$\LL_0(T) = \{0\}$,
$\LL_1(T) = \omega \backslash \{0\}$,
$\LL_{n+1}(T) = \{\omega \cdot n + k : k \in \omega\}$ for $0 < n < \omega$,
and $\LL_\alpha(T) = \{\omega \cdot \alpha + k : k \in \omega\}$ when
$\omega \le \alpha < \omega_1$.
As in the usual construction of a special Aronszajn tree, we construct
$\varphi: T \to  \QQQ$ and $\tro$ recursively so that $\varphi(0) = 0$ and
\[
\begin{array}{l}
\forall x \in T \, \forall \alpha < \omega_1 \, \forall q \in  \QQQ \, [
\alpha > \height(x) \wedge q > \varphi(x) \to \\
\qquad \exists y \in \LL_\alpha(T)\, [x \tro y \wedge \varphi(y) = q ]]
\ \ .
\end{array}
\tag{$\ast$}
\]
This implies, in particular,
that each node has $\aleph_0$ immediate successors.

Let $\langle \psi_\alpha  : \alpha < \omega_1 \rangle$
be a $\diamondsuit$ sequence, where each $\psi_\alpha : \alpha \to  B$.
Such a sequence exists by $\diamondsuit$ because $|B| \le  \aleph_1$.

In the recursive construction of $\tro$ and $\varphi$, do the usual thing in
building each $\LL_\gamma(T)$ to preserve $(\ast)$.  But in addition, whenever
$\omega \cdot \gamma = \gamma > 0$ (so $T_\gamma = \gamma$ as a set,
and $\psi_\gamma : T_\gamma \to  B$):
\emph{if} $\psi_\gamma$ is a DP map,
then \emph{if} it is possible, extend $\tro$ so that the node
$\gamma \in \LL_\gamma(T)$ satisfies:
\[
\sup\{\varphi(x) : x \tro \gamma\}  \le 1 
\text{ and }
\langle \psi_\gamma(x) : x \tro \gamma \rangle 
\text{ does not converge in } B \ \ .
\tag{$\dag$}
\]
This implies that $\psi_\gamma$ could not extend to a continuous
map into $B$.
Use the nodes $\gamma+1, \gamma + 2 ,\ldots$ to preserve $(\ast)$,
so if $(\dag)$ is possible, we may let $\varphi(\gamma) = 1$.
If $(\dag)$ is impossible, then ignore it and just preserve $(\ast)$. To ensure that the tree will be Hausdorff, make sure that if
$j\neq k$ then $\gamma + j$ and $\gamma+k$ are limits of distinct branches.

\begin{lemma}[Main Lemma] 
Suppose that $\psi : T \to  B$ is a DP map.  Then there is a 
club $C \subseteq \omega_1$ so that for all limit points $\gamma$ of $C$:
$\omega \cdot \gamma = \gamma$, and \emph{if} $\psi_\gamma = \psi \res\gamma$,
\emph{then} $(\dag)$ is possible at level $\gamma$.
\end{lemma}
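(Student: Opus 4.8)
The plan is to prove the Main Lemma by a reflection-and-contradiction argument, in which the hypothesis on $B$ is used only at the very last step. First I would fix a large regular $\theta$ and a continuous increasing $\in$--chain $\langle M_\xi : \xi < \omega_1\rangle$ of countable elementary submodels of $H(\theta)$, each containing $T$, $\varphi$, $\psi$, the metric $d$, and the $\diamondsuit$--sequence, with $M_\xi \in M_{\xi+1}$. Let $C = \{M_\xi \cap \omega_1 : \xi < \omega_1\}$; this is a club, and if $\gamma$ is a limit point of $C$ then $\gamma = M_\gamma \cap \omega_1$ for the limit model $M_\gamma = \bigcup_{\xi<\gamma} M_\xi \prec H(\theta)$, so $\omega\cdot\gamma = \gamma$ holds automatically and, crucially, every unbounded subset of $\omega_1$ lying in $M_\gamma$ meets $\gamma$ cofinally. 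Fix such a $\gamma$ and assume $\psi_\gamma = \psi\res\gamma$. I record one reduction: the set $U := \{x\in T : \varphi(x)<1\}$ is open (below any node the $\varphi$--values only decrease) and stationary (by $(\ast)$ there are nodes of $\varphi$--value $1/2$ at every level), and likewise $x\up\cap U$ is stationary open for every $x\in U$; moreover any branch contained in $U$ automatically has $\varphi$--supremum $\le 1$. So it suffices to find a cofinal branch of $T_\gamma$ inside $U$ along which $\psi$ does not converge.

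Second, I would argue by contradiction: suppose instead that $\psi$ converges in $B$ along \emph{every} cofinal branch of $T_\gamma$ that lies in $U$. The engine is Lemma \ref{lemma-parac}: for every $x\in U$ the set $x\up\cap U$ is stationary open, so there are at least two distinct ``accumulation values'' of $\psi$ on $x\up\cap U$, i.e. points $c\in B$ with $\psi\iv(N_\varepsilon(c))\cap x\up\cap U$ stationary for all $\varepsilon>0$. Using this I would build a Cantor scheme $\{x_s : s\in 2^{<\omega}\}\subseteq U$ with $x_s \tro x_{s\cat i}$, together with centers $c_s\in B$ and radii $\varepsilon_s\to 0$, so that at each $s$ the two chosen accumulation values satisfy $c_{s\cat 0}\ne c_{s\cat 1}$ with $d(c_{s\cat 0},c_{s\cat 1})\ge 3\varepsilon_{s\cat i}$, and $\psi(x_{s\cat i})\in N_{\varepsilon_{s\cat i}}(c_{s\cat i})$. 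The point is to do this entirely inside $M_\gamma$ while driving heights up to $\gamma$: carrying out stage $n$ inside $M_{\xi_n}$ for an increasing $\xi_n\to\gamma$, the relevant stationary set $\psi\iv(N_{\varepsilon}(c_{s\cat i}))\cap x_s\up\cap U$ is an element of $M_\gamma$ and hence meets $\gamma$ cofinally, so $x_{s\cat i}$ can be chosen of height below $\gamma$ but above any prescribed earlier level. Thus for each $f\in 2^\omega$ the downward closure of $\{x_{f\res n}:n\in\omega\}$ is a cofinal branch $b_f$ of $T_\gamma$ lying in $U$.

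Third, I would read off the contradiction. By the contradiction hypothesis each $b_f$ has $\psi$ converging to some $c_f\in B$; since $\psi(x_{f\res n})\in N_{\varepsilon_{f\res n}}(c_{f\res n})$ and the radii shrink, $c_f$ is the limit of the centers $c_{f\res n}$, so $f\mapsto c_f$ is continuous, while the $3\varepsilon$--separation at each splitting node forces a definite gap $c_f\ne c_g$ whenever $f\ne g$. Hence $f\mapsto c_f$ is a continuous injection of the compact space $2^\omega$ into $B$, i.e. a homeomorphism onto a Cantor subset $\{c_f : f\in 2^\omega\}\subseteq B$, contradicting the hypothesis that $B$ has no Cantor subsets. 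Therefore the assumption was false, $(\dag)$ is possible at $\gamma$, and the Main Lemma follows.

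I expect the main obstacle to be the reflection step of the second paragraph: the splitting supplied by Lemma \ref{lemma-parac} is a global, stationarity--based phenomenon in $\omega_1$, whereas the branches I need must live below the single ordinal $\gamma$ and be cofinal there. Reconciling these is exactly what the elementary submodel $M_\gamma$ provides, via the fact that an unbounded set coded in $M_\gamma$ reflects to an unbounded subset of $\gamma=M_\gamma\cap\omega_1$. A secondary subtlety, and the reason the whole argument is cast as a contradiction rather than a direct construction, is that $B$ need not be complete, so the Cauchy sequence of centers $c_{f\res n}$ need not converge in $B$ on its own; it is precisely the assumption that $(\dag)$ fails --- that every branch in $U$ converges --- that places each limit $c_f$ back inside $B$ and thereby yields an honest Cantor subset.
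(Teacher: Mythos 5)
Your setup and reflection machinery coincide with the paper's (the same club built from a chain of elementary submodels, the same reduction to the stationary open set $U=\{x\in T:\varphi(x)<1\}$, the same splitting engine, Lemma \ref{lemma-parac}), and your endgame---assume every cofinal branch of $T_\gamma$ inside $U$ converges and manufacture a Cantor subset of $B$ from the limits---is a legitimate variant of the paper's finish (the paper argues directly: it chooses $f$ with $\bigcap_n W_{f\res n}=\emptyset$, and that branch itself witnesses non-convergence, no contradiction hypothesis needed). However, there is a genuine gap in your Cantor scheme: you control $\psi$ only at the single nodes $x_{s\cat i}$, via $\psi(x_{s\cat i})\in N_{\varepsilon_{s\cat i}}(c_{s\cat i})$, and you never constrain where $\psi$ sends the cone above $x_{s\cat i}$. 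At the next stage you take accumulation values of $\psi$ on $x_{s\cat i}\up\cap U$, and nothing forces these to lie anywhere near $c_{s\cat i}$; your listed conditions contain no nesting requirement such as $\overline{N_{\varepsilon_{s\cat i}}(c_{s\cat i})}\subseteq N_{\varepsilon_s}(c_s)$. Without nesting, both key claims of your third paragraph fail: the $3\varepsilon$-separation at a split only separates the centers at that one level, and since $c_f=\lim_m c_{f\res m}$ with all later centers unconstrained, $c_f$ and $c_g$ may drift back together (so injectivity is unproved), and the convergence $c_{f\res n}\to c_f$ has no modulus uniform in $f$ (so continuity is unproved). Note that injectivity alone would not save you: a set with no Cantor subsets can have size continuum (a Bernstein set), so you genuinely need $f\mapsto c_f$ to be a continuous map on the compact space $2^\omega$, i.e.\ you genuinely need nesting.

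The missing ingredient is exactly the piece of the paper's proof your proposal omits: alongside the nodes $x_s$ one must carry stationary open tree sets $U_s$ with $U_s\subseteq\psi\iv(W_s)$ and $(U_s)^p=U_s$ (the pruning operation of Section 2), where $W_s$ is the current ball. At the successor step, Lemma \ref{lemma-parac} is applied not to $\psi$ on $x_s\up\cap U$ but to $\psi\res(U_s\cap x_s\up)$ viewed as a map \emph{into} $W_s$; this forces the two new accumulation values $b_0,b_1$ to lie in $W_s$, and taking $\varepsilon$ at most $d(b_0,b_1)/3$, $d(b_i,B\setminus W_s)/2$, and $1/(n+1)$ yields balls $W_{s\cat i}=N_\varepsilon(b_i)$ with disjoint closures contained in $W_s$; setting $U_{s\cat i}=(\psi\iv(W_{s\cat i})\cap U_s\cap x_s\up)^p$ propagates the invariant (pruning is what keeps these sets stationary and lets you pick $x_{s\cat i}$ of large height inside the submodels). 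With that invariant restored, your contradiction argument does go through: $c_f\in\bigcap_n\overline{W_{f\res n}}$, the separation and shrinkage persist to the limits, and $f\mapsto c_f$ is a continuous injection, contradicting the hypothesis on $B$. So the flaw is not in your overall architecture but in the absence of the pullback-and-prune bookkeeping that makes the limit map continuous and injective; as written, the conclusion of your third paragraph does not follow from the conditions stated in your second.
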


The theorem follows immediately, since choosing such a $\gamma$
for which $\psi_\gamma = \psi \res\gamma$, we see that $\psi$
cannot be continuous at node $\gamma \in \LL_\gamma(T)$.

So, we proceed to prove the Main Lemma. 
We use a standard definition of $C$ --- namely,
let $\langle M_\xi : \xi < \omega_1 \rangle$ be a continuous 
chain of countable elementary submodels of $H(\theta)$ (for a suitably
large regular $\theta$), such that
$\varphi, \psi, \tro , B \,\in\, M_0$ and each $M_\xi \in M_{\xi + 1}$.
Let $C = \{M_\xi \cap \omega_1 : \xi < \omega_1\}$.

Now, fix a limit point $\gamma$ of $C$, with 
$\psi_\gamma = \psi \res\gamma$.  Let $\alpha_n \nearrow \gamma$,
with all $\alpha_n \in C$.
We shall build a Cantor tree of candidates for the
path satisfying $(\dag)$, and then prove that one of these
works by using the fact that $B$ does not have a Cantor subset.
For $s \in 2^{< \omega}$, construct $W_s, U_s, x_s$ with the following
properties; here, $|s|$ denotes the length of $s$.

\begin{itemizz}
\item[1.] $W_s \subseteq B$ is open and non-empty,
and $\diam(W_s) \le 1 / |s|$.
\item[2.]  $W_\emptyset = B$.
\item[3.] $\overline{W_{s \cat 0}} , \overline{W_{s \cat 1}} \subseteq W_s$ and
$\overline{W_{s \cat 0}} \cap \overline{W_{s \cat 1}} = \emptyset $.
\item[4.] $U_s$ is a stationary open subset of $T$, with $(U_s)^p = U_s$.
\item[5.] $U_\emptyset = \{x \in T : \varphi(x) < 1\}$,
\item[6.] ${U_{s \cat 0}} , {U_{s \cat 1}} \subseteq U_s$ and
$U_s \subseteq \psi\iv( W_s )$.
\item[7.]
$x_s \in U_s$ and $U_{s\cat i} \subseteq x_s \up$ for $i = 0,1$.
\item[8.]
$x_\emptyset = 0$, the root node of $T$.
\item[9.] For $n = |s|$:
$\height(x_s) < \alpha_n$ and, when $n > 0$,
$\height(x_s) \ge \alpha_{n-1}$.
\item[10.] For $n = |s|$ and $\alpha_n = M_{\xi_n} \cap \omega_1$:
$W_s, U_s, x_s \in M_{\xi_n}$.
\end{itemizz}
For each $f \in 2^\omega$, conditions (7) and (9) guarantee that
$P_f := \bigcup \{ x_{f\res n} \down : n \in \omega\}$
is a cofinal path through $T_\gamma$.
Now, fix $f$ so that $\bigcap_{n \in \omega} W_{f\res n} = \emptyset$.
There is such an $f$ because otherwise, by conditions (1)(3),
$\bigcup \{ \bigcap_{n \in \omega} W_{f\res n} : f \in 2^\omega\}$
would be a Cantor subset of $B$.
Then, $(\dag)$ will hold if we place node $\gamma$ above
the path $P_f$; note that condition (5) guarantees that
$\sup\{\varphi(x) : x \tro \gamma\}  \le 1$, 
and every limit point of $\langle \psi_\gamma(x) : x \tro \gamma \rangle $
must lie in $\bigcap_{n \in \omega} W_{f\res n}$,
which is empty.

Of course, we need to verify that the $W_s, U_s, x_s$ can
be constructed.  Fix $s$, with $n = |s|$,
and assume that we have $W_s, U_s, x_s$.
Note that $ U_s \cap x_s \up$ is stationary by $(U_s)^p = U_s$.
Applying Lemma \ref{lemma-parac}
\big(to $\psi \res (U_s \cap x_s \up) : (U_s \cap x_s \up) \to W_s$\big),
there exist $b_0 \ne b_1$ in $W_s$ such that 
$\psi\iv( N_\varepsilon(b_i) ) \cap U_s \cap x_s \up$ is stationary
for all $\varepsilon > 0 $; applying 
condition (10), choose such $b_0, b_1 \in M_{\xi_n}$.
Then fix $\varepsilon$ to be the smallest of $1/(n+1)$,
$d(b_0, b_1) / 3$, 
$d(b_0, B\backslash W_s)   / 2$, and $ d(b_1, B\backslash W_s) / 2$.
Let $W_{s\cat i} = N_\varepsilon(b_i)$ and 
$U_{s\cat i} = ( \psi\iv( W_{s\cat i} ) \cap U_s \cap x_s \up) ^ p$.

Then choose  $x_{s\cat i} \in U_{s\cat i}$ with
$\alpha_n \le \height( x_{s\cat i} )$; 
such an $x_{s\cat i} $ exists by $(U_{s\cat i})^p = U_{s\cat i}$.
Also, make sure that $x_{s\cat i} \in M_{\xi_{n+1}}$
(using $M_{\xi_{n+1}} \prec H(\theta)$), 
which guarantees that $\height( x_{s\cat i} ) < \alpha_{n + 1}$
and that condition (10) will continue to hold.
\end{proofof}

\section{Constructing Continuous Maps}
\label{sec-const}

\begin{proofof}{Theorem \ref{thm-build-map}}
Let $H = \{1,4,16, \ldots\} = \{2^{2i} : i \in \omega\}$ and
$K = \{2,8,32, \ldots\} = \{2^{2i + 1} : i \in \omega\}$.
Observe that $H \cap K = \emptyset$ and
\[
\forall n_1, n_2 \in H\, \forall j_1, j_2 \in K\, 
 [ n_1 + j_1 = n_2 + j_2 \; \to\;  n_1 = n_2 \wedge j_1 = j_2 ] \ \ .
\]
Now, let $P$ be the set of all real numbers of the form
$\sum_{j \in K} \varepsilon_j 2^{-j}$, where each 
$\varepsilon_j \in \{0,1\}$.  Then $P$ is a Cantor set
and $0 \in P \subset [0,1]$.

Let $S$ be the set of all sums of the form
$\sum_{n \in H} z_n 2^{-n}$, where each $z_n \in P$.
Then $S$ is compact, since it is the range of the continuous
map $\Gamma : P^H \to \RRR$ defined by
$\Gamma (\vec z) = \sum_{n \in H} z_n 2^{-n}$.  Also, $\Gamma$ is 1-1;
that is,
\[
\sum_{n \in H} z_n 2^{-n} = \sum_{n \in H} w_n 2^{-n} 
\;\Rightarrow\;
\forall n \in H \, [z_n = w_n] \qquad (\text{all $z_n, w_n \in P$ } )\ \ .
\tag{\ding{"60}}
\]
To see this, let $z_n = \sum_{j \in K} \varepsilon_{j,n} 2^{-j}$ and
$w_n = \sum_{j \in K} \delta_{j,n} 2^{-j}$.  We then have
$\sum\{ \varepsilon_{j,n} 2^{-(j + n) } : j \in K\wedge n \in H \}   =
\sum\{ \delta_{j,n} 2^{-(j + n) } : j \in K\wedge n \in H \}   $.
Since the values $j + n$ are all different, each 
$\varepsilon_{j,n} = \delta_{j,n}$.

For $n \in H$, define the ``coordinate projection'' $\pi_n: S \to P$ so that
we have
$\pi_n ( \sum_{n \in H} z_n 2^{-n}  ) = z_n$.  So,
$\pi_n = \hat\pi_n \circ \Gamma\iv$, where $\hat\pi_n : P^H \to P$
is the usual coordinate projection.

Since $T$ is special, fix $a : T \to H$ such that each
$A_n :=  a\iv\{n\}$ is antichain.
Also, fix a 1-1 function $\zeta : T \to P \backslash \{0\}$ such
that $\zeta(T)$ has no perfect subsets.  Then, define
\[
\psi(x) = \sum \{ \zeta(t) \cdot 2^{- a(t)} : t \in x\down\} \ \ .
\]
Let $B$ be the range of $\psi$;  then $\psi : T \to B$ is clearly
continuous and order preserving.

Note that $\psi(x) = \sum_{n \in H} z_n 2^{-n} $,
where $z_n = \zeta(t)$ if $t \in A_n \cap x \down$,
and $z_n = 0$ if $A_n \cap x \down = \emptyset$.  Then,
$x\down \ne y\down \to \psi(x) \ne \psi(y)$
follows from (\ding{"60}) and the fact that $\zeta$ is 1-1.

Suppose that $C \subseteq B$ is a Cantor set.
Then each $\pi_n(C)$ is a compact subset of $\ran(\zeta) \cup \{0\}$,
and is hence countable.  There is then a countable $\alpha$
such that $\pi_n(C) \subseteq \zeta(T_\alpha) \cup \{0\}$ for all $n \in H$.
So, fix $x \in T$ with $\psi(x) \in C$ and $\height(x) > \alpha$,
let $x\down \cap \LL_\alpha(T) = \{t\}$, and let $n = a(t)$.
Then $\zeta(t) = \pi_n(\psi(x)) \in \pi_n(C)$ and 
$\zeta(t) \notin \zeta(T_\alpha) \cup \{0\}$, a contradiction.
\end{proofof}

\end{document}